\providecommand{\U}[1]{\protect\rule{.1in}{.1in}}
\newtheorem{theorem}{Theorem}
\newtheorem{corollary}[theorem]{Corollary}
\newtheorem{definition}[theorem]{Definition}
\newtheorem{lemma}[theorem]{Lemma}
\newtheorem{remark}[theorem]{Remark}
\newenvironment{proof}[1][Proof]{\textbf{#1.} }{\ \rule{0.5em}{0.5em}}
\begin{document}

\begin{center}
{\LARGE Outer actions of finite groups on prime C*-algebras and duality}

\bigskip

\bigskip

Costel Peligrad
\end{center}

\bigskip

Department of Mathematical Sciences, University of Cincinnati, PO Box 210025,
Cincinnati, OH 45221-0025, USA. E-mail address: costel.peligrad@uc.edu

Key words and phrases. dynamical system, algebra of local multipliers,
strictly outer actions.

2020 Mathematics Subject Classification. Primary 46L55, 46L40; Secondary 20F29.

\bigskip

\textbf{ABSTRACT. An action of a compact, in particular finite group on a
C*-algebra is called properly outer if no automorphism of the group that is
distinct from identity is implemented by a unitary element of the algebra of
local multipliers of the C*-algebra and strictly outer if the commutant of the
algebra in the algebra of local mutipliers of the cross product consists of
scalars [11]. In [11, Theorem 11] I proved that for finite groups and prime
C*-algebras (not necessarily separable), the two notions are equivalent. I
also proved that for finite abelian groups this is equivalent to other
relevant properties of the action [11 Theorem 14]. In this paper I add other
properties to the list in [11, Theorem 14]. }

\textbf{ }

\bigskip In this paper, the C*-algebras under consideration are not
necessarily separable. Let $B$\ be a C*-algebra. Then the multiplier algebra
$M(B)$\ of $B$\ is defined as $M(B)=\left\{  b\in B^{\prime\prime}:ba\in
B\text{ and }ab\in B\text{ for all }a\in B\right\}  $ where $B^{\prime\prime}%
$\ is the bidual of $B$ [10]. Notice that if $\left\{  e_{\lambda}\right\}  $
is an approximate unit of $B$\ then the strong limit of $\left\{  e_{\lambda
}\right\}  $ in $B^{\prime\prime}$\ is the unit of $B^{\prime\prime}.$
Therefore $M(B)$\ is a unital C*-algebra that contains $B$\ as an essential
ideal (a two sided ideal $I\subset B$\ is called essential if $\left\{  x\in
B:xa=0\text{ for all }a\in B\right\}  =\left\{  0\right\}  .$ \ If $A$\ is a
C*-algebra, denote by $\mathcal{I}$\ the set of all closed essential, two
sided ideals of $A$. If $I\subset J$\ are two such ideals then according to
[10, Proposition 3.12.8.] we have the inclusion $M(J)\subset M(I)$\ where
$M(I),$\ respectively $M(J)$ are the multiplier algebras of $I,$\ respectively
$J.$\ The inductive limit of the system $\left\{  M(I):I\in\mathcal{I}%
\right\}  $\ is called the algebra of local multipliers of $A$\ and denoted
$M_{loc}(A)$ [1]$.$ Originally, this algebra was denoted $M^{\infty}(A)$\ and
called the algebra of essential multipliers of $A$\ [9]. Recall that a
C*-algebra, $A,$ is called prime if every (two sided) ideal of $A$\ is essential.

In [2 Proposition 3.2.] the authors prove that three possible definitions of
properly outer automorphisms of separable C*-algebras are equivalent. In [11]
I have chosen one of those definititions for not necessarily separable
C*-algebras, namely:

\begin{definition}
An automorphism of a (not necessarily separable) C*-algebra is called properly
outer if it is not implemented by a unitary element of $M_{loc}(A).$
\end{definition}

This definition implies the other two equivalent statements of [2, Proposition
3.2.] for not necessarily separable C*-algebras.

In the rest of this paper we will consider a C*-dynamical system
$(A,G,\alpha)$\ where $A$\ is a C*-algebra, $G$\ a finite group and $\alpha
$\ a faithful action of $G$\ on $A$ (the action $\alpha$\ is called faithful
if $\alpha_{g}\neq id$ for $g\neq e$\ where $e$\ is the unit of $G$ and
$id\ $is the identity automorphism of $A).$ If the group $G$ is finite, the
action $\alpha$ of $G$ on\ $A$\ has a natural extension to $M_{loc}(A)$ (this
fact is also true for compact groups and prime C*-algebras that have a
faithful factorial representation [12], in particular separable prime
C*-algebras)$.$

\bigskip The cross product $A\times_{\alpha}G$ of $A$\ by a finite group,
$G,$\ is defined as the C*-completion of the algebra of all functions
$f:G\rightarrow A$ with the following operations:%
\[
(f\cdot h)(g)=\sum_{p\in G}f(p)\alpha_{p}(h(p^{-1}g))
\]%
\[
f^{\ast}(g)=\alpha_{g}(f(g^{-1})^{\ast})
\]
As $G$\ is a finite group, each $f\in A\times_{\alpha}G$\ can be written
$f=\sum a_{g}\delta_{g}$\ where $a_{g}\in A$\ and $\delta_{g}$\ is the
function on $G$ with values in the multiplier algebra $M(A)$\ of $A$\ such
that $\delta_{g}(g)=1$\ and $\delta_{g}(p)=0$\ for $p\neq g.$

In [11], I defined the notion of strict outerness of the action $\alpha$ of a
finite group $G$\ on a C*-algebra $A$\ (similar to the corresponding
definition for actions on von Neumann factors in [13]) and proved that it is
equivalent with proper outerness [11, Theorem 11] :

\begin{definition}
T\bigskip he action $\alpha$ is called strictly outer if $A^{\prime}\cap
M_{loc}(A\times_{\alpha}G)=%
\mathbb{C}
I,$\ where $A^{\prime}$\ is the commutant of $A.$
\end{definition}

\bigskip

In Theorem 12 below I will discuss several conditions on a C*-dynamical system
$(A,G,\alpha)$ with\ $A$\ prime and $G$\ finite abelian that are equivalent
with the proper outerness and the strict outerness of $\alpha.$

Let $G$\ be a compact (in particular finite) abelian group. If $\widehat{G}%
$\ denotes the dual of $G$\ and $\gamma\in\widehat{G},$\ let
\[
A_{\gamma}=\left\{  a\in A:\alpha_{g}(a)=\left\langle g,\gamma\right\rangle
a,g\in G\right\}  .
\]
If $\gamma=e$ the unit of $\widehat{G}$\ we denote $A_{e}=A^{\alpha}$ and call
it the fixed point algebra of the action $\alpha.$ Then the mapping
\[
P^{\alpha}(a)=\int\alpha_{g}(a)dg.
\]
is a faithful norm one conditional expectation $P^{\alpha}:A\rightarrow
A^{\alpha}$. If $A$\ has a factorial representation (in particular if $A$\ is
separable), then, $P^{\alpha}$\ admits a natural extension to a faithful norm
one conditional expectation $P^{\alpha}:M_{loc}(A)\rightarrow M_{loc}%
(A)^{\alpha}$ even for non abelian groups ([12, Lemma 1.2.]). If $G$\ is
finite (not necessarily abelian), then, since the restriction of an
automorphism to an invariant ideal has an extension to its multiplier algebra,
it is obvious that for every C*-algebra, $A,$ $P^{\alpha}$\ admits a natural
extension to a faithful norm one conditional expectation of $M_{loc}(A)$ onto
$M_{loc}(A)^{\alpha}.$ In this case (of finite groups)%
\[
P^{\alpha}(a)=\frac{1}{\left\vert G\right\vert }\sum_{g\in G}\alpha_{g}(a)
\]
where $\left\vert G\right\vert $\ is the cardinality of $G.$%
\[
\]

We will denote by $sp(\alpha)$\ the Arveson spectrum of $\alpha$\ and by
$\ \widetilde{sp}\ (\alpha)$\ the strong Arveson spectrum of $\alpha$:
\[
sp(\alpha)=\left\{  \gamma\in\widehat{G}:A_{\gamma}\neq\left\{  0\right\}
\right\}  .
\]%
\[
\ \widetilde{sp}\ (\alpha)=\left\{  \gamma\in\widehat{G}:\overline{A_{\gamma
}A_{\gamma}^{\ast}}=A^{\alpha}\right\}  .
\]
and, if $\mathcal{H}$\ is the set of all non zero $\alpha$-invariant
hereditary subalgebras of $A,$ define the Connes spectrum $\Gamma(\alpha)$ and
the strong Connes spectrum $\widetilde{\Gamma}(\alpha)$ of Kishimoto
\[
\Gamma(\alpha)=\cap\left\{  sp(\alpha|_{C}:C\in\mathcal{H}\right\}  .
\]%
\[
\widetilde{\Gamma}(\alpha)=\cap\left\{  \widetilde{sp}(\alpha|_{C}%
:C\in\mathcal{H}\right\}  .
\]
If $\mathcal{H}_{B}$\ denotes the set of $\alpha$-invariant hereditary
C*-subalgebras of $A$, $C,$\ such that the two sided ideal generated by
$C$\ is essential in $A$, the Borchers spectrum of $\alpha$\ is by definition%
\[
\Gamma_{B}(\alpha)=\cap\left\{  sp(\alpha)|_{C}:C\in\mathcal{H}_{B}\right\}
.
\]
It is obvious that, if $A$\ is prime then $\Gamma_{B}(\alpha)=\Gamma(\alpha).$

\bigskip The next results and definitions which are valid for general, not
necessarily separable C*-algebras, will be used in the proof of the main
result of this paper (Theorem 12).

\bigskip

\begin{lemma}
a) [11, Lemma 3.a)] If $(A,G,\alpha)$ is a C*-dynamical system with
$G$\ finite then $M_{loc}(A)^{\alpha}\subset M_{loc}(A^{\alpha})$\newline b)
[11, Lemma 3.d)] If $A$\ is prime then $A^{\prime}\cap M_{loc}(A)$ consists of scalars.
\end{lemma}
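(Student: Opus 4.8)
For part (a), the plan is to realize an $\alpha$-fixed local multiplier as an honest multiplier of an essential ideal of $A^{\alpha}$. I would start with $x \in M_{loc}(A)^{\alpha}$ and choose a closed essential ideal $I$ of $A$ with $x \in M(I)$. Since $G$ is finite, I replace $I$ by $J=\bigcap_{g\in G}\alpha_{g}(I)$; this is a finite intersection of essential ideals, hence essential, it is $\alpha$-invariant, and because $J\subseteq I$ the inclusion $M(I)\subseteq M(J)$ from [10, Proposition 3.12.8] keeps $x$ inside $M(J)$. On the invariant ideal $J$ the restriction of the extended action agrees with $\alpha$, so $\alpha_{g}(x)=x$ persists. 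I then check that $x$ restricts to a multiplier of $J^{\alpha}:=J\cap A^{\alpha}$: for $a\in J^{\alpha}$ one has $xa\in J$ and $\alpha_{g}(xa)=\alpha_{g}(x)\alpha_{g}(a)=xa$, so $xa\in J\cap A^{\alpha}=J^{\alpha}$, and symmetrically $ax\in J^{\alpha}$; thus $x$ defines a double centralizer of $J^{\alpha}$, i.e. $x\in M(J^{\alpha})$.

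The one genuinely nontrivial point for (a) is that $J^{\alpha}$ is essential in $A^{\alpha}$, which I would establish using positivity of $P^{\alpha}$. Let $b\in A^{\alpha}$ be positive and nonzero. Since $J$ is essential in $A$, there is $d\in J$ with $bd\neq 0$, hence $b\,dd^{*}\,b\neq 0$. Applying $P^{\alpha}$ and using invariance of $J$,
\[
b\,P^{\alpha}(dd^{*})\,b=\frac{1}{|G|}\sum_{g\in G}\big(b\,\alpha_{g}(d)\big)\big(b\,\alpha_{g}(d)\big)^{*}\;\geq\;\frac{1}{|G|}\,b\,dd^{*}\,b\;\neq\;0,
\]
so $b$ does not annihilate $P^{\alpha}(dd^{*})\in J^{\alpha}$. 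Running this with a general element replaced by $b=c^{*}c$ shows no nonzero element of $A^{\alpha}$ annihilates $J^{\alpha}$, i.e. $J^{\alpha}$ is essential. Then $x\in M(J^{\alpha})\subseteq M_{loc}(A^{\alpha})$, which is the claim.

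For part (b), I would argue by spectral projections and primeness. Take $z\in A'\cap M_{loc}(A)$; splitting into real and imaginary parts, it suffices to treat $z=z^{*}$. Pick a nonzero (hence essential) ideal $I$ with $z\in M(I)$; since $z$ commutes with $A\supseteq I$, it lies in the center of $M(I)$, and any $e=f(z)$ with $f$ continuous again commutes with $A$. Suppose $z\notin\mathbb{C}I$; then its spectrum contains two distinct points, so I can choose continuous $f_{1},f_{2}$ with $f_{1}f_{2}=0$ and $e_{i}:=f_{i}(z)\neq 0$ in the unital algebra $M(I)$. Setting $J_{i}:=\overline{e_{i}I}$, the fact that $e_{i}$ commutes with $A$ makes each $J_{i}$ a closed two-sided ideal of $A$, nonzero because a multiplier of $I$ vanishing on $I$ is zero, while $J_{1}J_{2}\subseteq\overline{e_{1}e_{2}I^{2}}=0$. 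This contradicts primeness of $A$, forcing $z$ to have one-point spectrum, i.e. $z\in\mathbb{C}I$.

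The main obstacle in (a) is precisely the essentiality of $J^{\alpha}$, where one must exploit that $P^{\alpha}$ is faithful and positive rather than merely a projection; the displayed domination of a sum of positive terms by its $g=e$ summand is the crux. In (b) the delicate point is ensuring that $J_{1},J_{2}$ are genuine two-sided ideals of $A$ and nonzero, which is exactly where one uses that $z$, and hence each $e_{i}$, lies in $A'$ and that $I$ is faithfully represented inside $M(I)$.
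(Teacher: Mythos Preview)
The paper does not supply its own proof of this lemma; it is quoted verbatim from reference~[11, Lemma~3] and used as a black box for the main results. So there is no proof in the paper to compare against, and your argument must be assessed on its own.

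Your proof of (a) is sound. Passing to the $\alpha$-invariant essential ideal $J=\bigcap_{g}\alpha_{g}(I)$, checking that an $\alpha$-fixed $x\in M(J)$ multiplies $J^{\alpha}$ into itself, and the positivity estimate showing $J^{\alpha}$ is essential in $A^{\alpha}$ are all correct. One point is left implicit: for the containment $M_{loc}(A)^{\alpha}\subset M_{loc}(A^{\alpha})$ to be a genuine inclusion, the induced $*$-homomorphism $M(J)^{\alpha}\to M(J^{\alpha})$ must be injective. This follows from the same computation you display, run with $x\in M(J)^{\alpha}$ in place of $b\in A^{\alpha}$: since $J$ is essential in $M(J)$ there is $d\in J$ with $xd\neq 0$, and then
\[
x\,P^{\alpha}(dd^{*})\,x^{*}=\frac{1}{|G|}\sum_{g}(x\alpha_{g}(d))(x\alpha_{g}(d))^{*}\ \geq\ \frac{1}{|G|}(xd)(xd)^{*}\neq 0,
\]
with $P^{\alpha}(dd^{*})\in J^{\alpha}$; hence $xJ^{\alpha}\neq 0$. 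Stating this explicitly would close the only gap.

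Your proof of (b) is correct. The essential observations---that $z$, and hence each $e_{i}=f_{i}(z)$, lies in $A'\cap M(I)$ and therefore in the center of $M(I)$; that $J_{i}=\overline{e_{i}I}$ is a nonzero two-sided ideal of $A$ because $e_{i}$ commutes with $A$ and $I$ is essential in $M(I)$; and that $J_{1}J_{2}\subset \overline{e_{1}e_{2}I^{2}}=0$---are all justified as you indicate, and the contradiction with primeness forces $z$ to be scalar.
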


\bigskip

\begin{lemma}
(5, Lemma 3.2.) \bigskip Let a be a positive element of the C* -algebra A,
$\left\{  a_{i}:i=1,...n\right\}  $elements of $A$, $\left\{  \alpha
_{i}:i=1,...n\right\}  $ automorphisms of $A$ with $\widetilde{\Gamma}%
(\alpha_{i})\neq\left\{  1\right\}  $ and $\epsilon>0.$ Then, there exists a
positive $x\in A$ with $\left\Vert x\right\Vert =1$ such that%
\[
\left\Vert xax\right\Vert \geqslant\left\Vert a\right\Vert -\epsilon\text{ and
}\left\Vert xa_{i}\alpha_{i}(x)\right\Vert \leq\epsilon
\]
\ 
\end{lemma}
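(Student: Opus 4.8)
The plan is to read the statement as a Kishimoto-type ``excision plus orthogonality'' lemma and to treat its two demands on $x$ separately. The estimate $\|xax\| \geq \|a\| - \epsilon$ will force $x$ to be localized where $a$ is close to its norm, while the estimates $\|x a_i \alpha_i(x)\| \leq \epsilon$ will force $x$ to be almost orthogonal to each of its translates $\alpha_i(x)$. First I would fix a hereditary subalgebra that disposes of the norm estimate once and for all, and then I would produce $x$ inside it meeting all the orthogonality estimates; the latter step is where the hypothesis $\widetilde{\Gamma}(\alpha_i) \neq \{1\}$ enters.

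For the norm estimate I would set $d = (a - (\|a\| - \epsilon))_{+}$, which is nonzero since $\|a\|$ lies in the spectrum of the positive element $a$, and let $B = \overline{dAd}$. Passing to the enveloping von Neumann algebra, elements of $B$ are supported under the spectral projection of $a$ for $[\|a\|-\epsilon,\|a\|]$, on which $a$ dominates $\|a\|-\epsilon$; hence $x(a-(\|a\|-\epsilon))x \geq 0$ and so $\|xax\| \geq \|a\| - \epsilon$ for every positive $x \in B$ with $\|x\|=1$. The whole problem thus reduces to finding a positive norm-one $x$ in the fixed nonzero hereditary subalgebra $B$ with $\|x a_i \alpha_i(x)\| \leq \epsilon$ for all $i$. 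Note that $B$ need not be invariant under any $\alpha_i$, but this is harmless since only the translate $\alpha_i(x)$, not $B$ itself, appears in the estimate.

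The orthogonality estimates are the heart of the matter and rest on the nontriviality of the strong Connes spectrum. Because $\widetilde{\Gamma}(\alpha_i) \neq \{1\}$, by definition there is a character $\gamma_i \neq 1$ lying in $\widetilde{sp}(\alpha_i|_C)$ for every nonzero $\alpha_i$-invariant hereditary subalgebra $C$; by the definition of $\widetilde{sp}$ this means $\overline{C_{\gamma_i}C_{\gamma_i}^{\ast}} = C^{\alpha_i}$, so every invariant corner contains spectral elements on which $\alpha_i$ acts by the nontrivial scalar $\langle \cdot, \gamma_i \rangle$ and whose supports exhaust the fixed-point algebra. Such spectral elements are precisely what is needed to separate a positive element from its own $\alpha_i$-translate. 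I would package this into a one-automorphism statement: given any nonzero hereditary subalgebra, any $b \in A$ and any $\delta > 0$, there is a nonzero hereditary subalgebra inside it on which $\|y b \alpha_i(y)\| < \delta$ holds uniformly for all contractions $y$. Since $\widetilde{\Gamma}$ records information about \emph{invariant} corners whereas the corner $B$ from the first step need not be invariant, a preliminary move is required here, namely passing to the $\alpha_i$-invariant hereditary subalgebra generated by $B$ and transporting the conclusion back; this is the first genuinely technical point.

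With this one-automorphism statement in hand I would finish by induction: set $B_0 = B$ and, at the $i$-th step, apply it to $\alpha_i$ and $a_i$ inside $B_{i-1}$ to obtain a nonzero hereditary subalgebra $B_i \subseteq B_{i-1}$ on which the $i$-th estimate holds uniformly; any positive norm-one $x \in B_n$ then lies in every $B_i \subseteq B$, so it meets all $n$ orthogonality estimates as well as the norm estimate. The main obstacle is the tension between the two demands on $x$: concentrating $x$ where $a$ is large (to keep $\|xax\|$ near $\|a\|$) works against spreading $x$ out to be orthogonal to its translates, and a single $x$ must do both. The strong Connes spectrum hypothesis is exactly what guarantees enough room inside every nonzero hereditary subalgebra, in particular inside the corner $B$ forced on us by $a$, to carry out the orthogonalization; and because it holds not merely on $A$ but on all corners, the estimates can be made uniform on a corner and therefore survive the nested induction against all the automorphisms at once. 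I expect the uniformity of the orthogonality on a corner, together with its persistence under the nesting, to be the crux of the argument.
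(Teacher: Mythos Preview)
The paper does not supply its own proof of this lemma; it is quoted as Kishimoto's result [5, Lemma 3.2], and the only commentary offered is that ``the key fact in the proof of the above Lemma is Lemma 1.1 in that paper.'' Your outline is precisely Kishimoto's strategy: first localise to the hereditary corner $B=\overline{dAd}$ with $d=(a-(\|a\|-\epsilon))_{+}$ so that the norm estimate is automatic, and then run a nested induction over the $\alpha_i$ using a one-automorphism orthogonality lemma inside successive corners. What you call the ``one-automorphism statement''---for any nonzero hereditary subalgebra, any $b\in A$ and any $\delta>0$ there is a smaller nonzero corner on which $\|y\,b\,\alpha_i(y)\|<\delta$ uniformly for contractions $y$---is exactly the content of Kishimoto's Lemma 1.1, the step the paper itself singles out as the crux; your identification of the passage from the (non-invariant) corner $B$ to an $\alpha_i$-invariant one as the genuinely technical move is also correct. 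So your plan coincides with the argument the paper points to, and there is nothing further to compare.
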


\bigskip

The key fact in the proof of the above Lemma is Lemma 1.1. in that paper. In a
subsequent paper, [6, Theorem 2.1.], Kishimoto has replaced the strong Connes
spectrum in [5, Lemma 1.1.] with the Borchers spectrum, $\Gamma_{B}(\alpha
_{i}|_{J}),$where $J$\ is an arbitrary $\alpha_{i}$-invariant ideal. If
$A$\ is prime, this condition is equivalent with the condition $\Gamma
(\alpha_{i})\neq\left\{  1\right\}  .$ Therefore

\bigskip

\begin{corollary}
Suppose $A$\ is a prime C*-algebra, $a$ a positive element of $A$, $\left\{
a_{i}:i=1,...n\right\}  $ elements of $A$, $\left\{  \alpha_{i}%
:i=1,...n\right\}  $ automorphisms of $A$ with $\Gamma(\alpha_{i})\neq\left\{
1\right\}  $ and $\epsilon>0.$ Then, there exists a positive $x\in A$ with
$\left\Vert x\right\Vert =1$ such that%
\[
\left\Vert xax\right\Vert \geqslant\left\Vert a\right\Vert -\epsilon\text{ and
}\left\Vert xa_{i}\alpha_{i}(x)\right\Vert \leq\epsilon
\]

\end{corollary}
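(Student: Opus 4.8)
The plan is to deduce the corollary from the sharper form of Kishimoto's lemma established in [6, Theorem 2.1], reducing the hypothesis required there to the single condition $\Gamma(\alpha_i)\neq\{1\}$ by exploiting the primeness of $A$. The analytic content of the conclusion is entirely imported from Kishimoto's results; what remains is to match the hypotheses.

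First I would recall that Lemma 5 above (Kishimoto's [5, Lemma 3.2]) already yields exactly the inequalities $\|xax\|\geq\|a\|-\epsilon$ and $\|xa_i\alpha_i(x)\|\leq\epsilon$, but under the stronger assumption $\widetilde{\Gamma}(\alpha_i)\neq\{1\}$. As the excerpt notes, the improvement in [6, Theorem 2.1] replaces this by the weaker requirement that $\Gamma_B(\alpha_i|_J)\neq\{1\}$ for every nonzero $\alpha_i$-invariant closed two-sided ideal $J$ of $A$. Thus it suffices to show that, when $A$ is prime, the hypothesis $\Gamma(\alpha_i)\neq\{1\}$ forces $\Gamma_B(\alpha_i|_J)\neq\{1\}$ for each such $J$ (the converse being immediate on taking $J=A$ and using $\Gamma_B(\alpha_i)=\Gamma(\alpha_i)$ for prime $A$).

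To verify this, fix a nonzero $\alpha_i$-invariant ideal $J$. Since every closed ideal of $J$ is a closed ideal of $A$ and $A$ is prime, $J$ is itself prime; hence, by the observation already recorded in the excerpt, $\Gamma_B(\alpha_i|_J)=\Gamma(\alpha_i|_J)$. Next I would compare $\Gamma(\alpha_i|_J)$ with $\Gamma(\alpha_i)$ directly from the definition as an intersection of Arveson spectra over invariant hereditary subalgebras. Because $J$ is hereditary in $A$, every nonzero $\alpha_i$-invariant hereditary subalgebra $C\subseteq J$ is automatically $\alpha_i$-invariant and hereditary in $A$; hence the collection $\mathcal{H}_J$ of such subalgebras of $J$ is contained in $\mathcal{H}$, and intersecting $sp(\alpha_i|_C)$ over a smaller family can only enlarge the result. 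This gives $\Gamma(\alpha_i)\subseteq\Gamma(\alpha_i|_J)$. Consequently, if $\Gamma(\alpha_i)$ contains some character $\gamma\neq 1$, so does $\Gamma(\alpha_i|_J)$, whence $\Gamma_B(\alpha_i|_J)=\Gamma(\alpha_i|_J)\neq\{1\}$.

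With the Borchers condition confirmed on every invariant ideal, [6, Theorem 2.1] applies verbatim and produces the positive norm-one $x$ satisfying both inequalities, completing the argument. I expect the only genuinely delicate point to be the spectral bookkeeping of the preceding paragraph, namely checking that primeness descends to invariant ideals and that the Borchers--Connes identity together with the monotonicity $\Gamma(\alpha_i)\subseteq\Gamma(\alpha_i|_J)$ transfers non-triviality to $\alpha_i|_J$; no new analytic estimate is needed, since the hard analysis resides in the cited theorems.
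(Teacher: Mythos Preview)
Your proposal is correct and follows the same route as the paper: the paper's argument consists precisely of the paragraph preceding the corollary, invoking [6, Theorem~2.1] and asserting that for prime $A$ the Borchers condition on every invariant ideal is equivalent to $\Gamma(\alpha_i)\neq\{1\}$. You have simply filled in the details the paper leaves implicit---that $J$ inherits primeness (so $\Gamma_B(\alpha_i|_J)=\Gamma(\alpha_i|_J)$) and that $\Gamma(\alpha_i)\subseteq\Gamma(\alpha_i|_J)$ via the containment $\mathcal{H}_J\subseteq\mathcal{H}$---which is exactly what the paper's one-line claim of equivalence requires.
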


\bigskip

If $(A,G,\alpha)$\ is a dynamical system with \ $A$ prime and $G$\ finite,
then, every (two sided) ideal $I\subset A$ contains an $\alpha$-invariant
ideal, namely $\cap\left\{  \alpha_{g}(I):g\in G\right\}  .$ Therefore,
$M_{loc}(A)$\ is the inductive limit of $\left\{  M(I):I\text{ }%
\alpha-\text{invariant ideal of }A\right\}  .$

In the next result I will show that if $\alpha$\ is an automorphism of a prime
C*-algebra that generates a finite group of automorphisms (i.e. $\alpha
^{n}=id$\ for some $n\in%
\mathbb{N}
)$, then, it is properly outer in the sense of our Definition 1 if and only if
$\Gamma(\alpha)\neq\left\{  1\right\}  .$ Part of the proof uses some ideas
from [10, Theorem 8.9.7.] and [4, Proposition 7].

\bigskip

\begin{lemma}
Let $\alpha$\ be an automorphism of a prime C*-algebra $A$\ such that
$\alpha^{n}=id$\ for some $n\in%
\mathbb{N}
.$ Then, $\alpha$\ is properly outer if and only if $\ \Gamma(\alpha
)\neq\left\{  1\right\}  .$
\end{lemma}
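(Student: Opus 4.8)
The plan is to write $m$ for the order of the finite cyclic group $G=\langle\alpha\rangle$ generated by $\alpha$, to identify $\widehat{G}$ with the group of $m$-th roots of unity through the characters $\chi_k(\alpha)=\zeta^{k}$, $\zeta=e^{2\pi i/m}$, and to work throughout with the spectral subspaces $A_k=\{a\in A:\alpha(a)=\zeta^{k}a\}$, so that $sp(\alpha|_C)=\{\chi_k:C_k\neq\{0\}\}$ for every $\alpha$-invariant hereditary subalgebra $C$. I prove the two implications separately, and I do not expect to need the Kishimoto-type Corollary here (that is reserved for Theorem 12); the proof rests instead on the cited results of Pedersen and of [4].

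First I would treat the implication: if $\alpha$ is not properly outer, then $\Gamma(\alpha)=\{1\}$. Assume $\alpha=\mathrm{Ad}(u)$ for a unitary $u\in M_{loc}(A)$. Since $M_{loc}(A)$ is the inductive limit of $\{M(I):I\text{ an }\alpha\text{-invariant ideal of }A\}$, we have $u\in M(I)$ for some $\alpha$-invariant essential ideal $I$, and $\alpha|_I=\mathrm{Ad}(u)$. Because $\mathrm{Ad}(u^{m})=\alpha^{m}=id$ on $A$, the element $u^{m}$ lies in $A^{\prime}\cap M_{loc}(A)$, which consists of scalars since $A$ is prime; rescaling $u$ by a scalar I may assume $u^{m}=1$, and replacing $u$ by $\zeta^{-j}u$ for a suitable $j$ I may assume that $1$ is an eigenvalue of $u$. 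Then the spectral projection $p_0$ for the eigenvalue $1$ is a nonzero polynomial in $u$, so $p_0\in M(I)$, it satisfies $up_0=p_0u=p_0$, and it is $\alpha$-fixed. For $x=p_0yp_0\in p_0Ip_0$ one computes $\alpha(x)=up_0yp_0u^{*}=p_0yp_0=x$, so $\alpha$ restricts to the identity on the nonzero $\alpha$-invariant hereditary subalgebra $C=p_0Ip_0$ of $A$. Hence $sp(\alpha|_C)=\{1\}$ and therefore $\Gamma(\alpha)\subseteq sp(\alpha|_C)=\{1\}$.

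For the converse I would first promote the equality $\cap_{C\in\mathcal{H}}sp(\alpha|_C)=\{1\}$ to the existence of a single nonzero $\alpha$-invariant hereditary subalgebra $C$ with $sp(\alpha|_C)=\{1\}$. This is where primeness enters: given nonzero $\alpha$-invariant hereditary subalgebras $C_1,C_2$, primeness gives $C_1AC_2\neq\{0\}$, and one manufactures a nonzero $\alpha$-invariant hereditary $C_3$ with $sp(\alpha|_{C_3})\subseteq sp(\alpha|_{C_1})\cap sp(\alpha|_{C_2})$. Since $\widehat{G}$ is finite, this downward directedness lets me intersect the finitely many subalgebras that exclude the finitely many nontrivial characters and produce one $C$ with $sp(\alpha|_C)=\{1\}$. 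As $sp(\alpha|_C)=\{1\}$ means $C_k=\{0\}$ for all $k\neq0$, the restriction $\alpha|_C$ is the identity.

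The remaining and hardest step is to build the implementing unitary out of an automorphism that is trivial on a full hereditary subalgebra. Here $C$ is full in the $\alpha$-invariant essential ideal $I=\overline{ACA}$, which is again prime, and $\alpha|_I$ fixes $C$ pointwise. Following the ideas of [10, Theorem 8.9.7] and [4, Proposition 7], an automorphism of a C*-algebra restricting to the identity on a full hereditary subalgebra is inner, implemented by a unitary in the multiplier algebra; applied to $\alpha|_I$ this yields $u\in M(I)\subseteq M_{loc}(A)$ with $\alpha|_I=\mathrm{Ad}(u)$, and since $I$ is essential in $A\subseteq M(I)$ the equality $\alpha=\mathrm{Ad}(u)$ holds on all of $A$, so $\alpha$ is not properly outer. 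I expect the genuine obstacle to be precisely this implementation step — the Morita/multiplier bookkeeping that produces $u$ from the partial isometries linking $C$ to $I$ — together with the verification of the downward directedness of the spectra $sp(\alpha|_C)$; both are the points at which the primeness of $A$ is indispensable.
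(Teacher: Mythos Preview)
Your forward implication (not properly outer $\Rightarrow$ $\Gamma(\alpha)=\{1\}$) has a genuine gap. From $u\in M_{loc}(A)$ you conclude that $u\in M(I)$ for some $\alpha$-invariant essential ideal $I$, on the grounds that $M_{loc}(A)$ is the inductive limit of the $M(I)$. But $M_{loc}(A)$ is the \emph{C*-completion} of the algebraic direct limit, and an element of the completion need not lie in any one of the building blocks; a unitary, even one satisfying $u^{m}=1$, is no exception. Everything downstream rests on this: the spectral projection $p_0$ is a polynomial in $u$, so you only know $p_0\in M_{loc}(A)$, and then $p_0Ip_0$ is not a priori a subset of $I$ (or of $A$) because $p_0$ need not multiply any ideal of $A$ into itself. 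The paper confronts exactly this difficulty and this is why its argument looks more delicate than yours: it only \emph{approximates} $u_0$ by a unitary $u\in M(I)^{\alpha}$ with $\|u_0-u\|<\epsilon/4$ (via [14, Proposition L.2.2]), then uses \emph{continuous} functional calculus with a bump function $f$ supported near some $\lambda_0\in Sp(u)$ to obtain $f(u)\in M(I)^{\alpha}$, and takes $C$ to be the hereditary subalgebra of $I\subseteq A$ generated by $f(u)If(u)$. On $C$ one no longer gets $\alpha|_C=\mathrm{id}$ but only the estimate $\|\alpha(x)-x\|<\epsilon\|x\|$; this is still enough to force $|\langle 1,\gamma\rangle-1|<\epsilon$ for any $\gamma\in sp(\alpha|_C)\supseteq\Gamma(\alpha)$ and reach the contradiction.

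For the converse your plan coincides with what the paper packages as a single citation to [11, Proposition 7]: find a nonzero $\alpha$-invariant hereditary $C$ on which $\alpha$ is trivial and then implement $\alpha$ by a unitary in $M(J)$ for the ideal $J=\overline{ACA}$. Your ``downward directedness'' step (manufacturing $C_3$ with $sp(\alpha|_{C_3})\subseteq sp(\alpha|_{C_1})\cap sp(\alpha|_{C_2})$) is the right idea but is not as automatic as you suggest; it needs an explicit construction and a verification that the spectrum really shrinks, which is where primeness is used. This part is recoverable, but the first gap is the one that actually breaks the proof.
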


\begin{proof}
\bigskip Suppose that $\alpha$\ is not properly outer, so there exists an
unitary element $u_{0}\in M_{loc}(A)$ such that $\alpha=Adu_{0}.$ Since
$\alpha$\ has a natural extension to $M_{loc}(A)$ and $\alpha(u_{0}%
)=u_{0}u_{0}u_{0}^{\ast}=u_{0}$, it follows that $u_{0}\in M_{loc}(A)^{\alpha
}.$ Let $G=%
\mathbb{Z}
_{m}$\ where $m=\min\left\{  n\in%
\mathbb{N}
:\alpha^{n}=id\right\}  $ and $\widehat{G}$ its dual group. Denote by $G_{0}$
the annihilator of $\Gamma(\alpha)\subset\widehat{G}$\ in $G.$ I will prove
that $G_{0}=G$, so $\Gamma(\alpha)=\left\{  1\right\}  .$ Since $G=%
\mathbb{Z}
_{m},$\ it is enough to prove that $1\in G_{0}$. Since, as noticed above
$M_{loc}(A)$\ is the inductive limit of $\left\{  M(I\right\}  :I$ $\alpha
-$invariant ideal of $A\},$ it follows that $M_{loc}(A)^{\alpha}$ is the
inductive limit of $\left\{  M(I\right\}  ^{\alpha}:I$ $\alpha-$invariant
ideal of $A\}.$ Suppose that $G_{0}\varsubsetneq G,$\ so $\Gamma(\alpha
)\neq\left\{  1\right\}  .$ Since $G$\ is finite, $\Gamma(\alpha)=\left\{
1,\gamma_{1},...\gamma_{k}\right\}  $\ is also finite. Let $d=\min\left\{
\left\vert \left\langle 1,\gamma_{i}\right\rangle -1\right\vert
:i=1,...k\right\}  $ and $\epsilon<d.$ Since $M_{loc}(A)^{\alpha}$ is the
inductive limit of $\left\{  M(I\right\}  ^{\alpha}:I$ $\alpha-$invariant
ideal of $A\},$ by [14, Proposition L.2. 2.] there exists an $\alpha
$-invariant ideal $I\subset A$\ and a unitary $u\in M(I)^{\alpha}$\ such that
$\left\Vert u_{0}-u\right\Vert <\frac{\epsilon}{4}.$ Choose $\lambda_{0}\in
Sp(u)$ and $f$\ a positive continuous function on the unit circle
$\boldsymbol{T}$\ with $suppf\subset\left\{  \lambda\in\boldsymbol{T}%
:\left\vert \lambda-\lambda_{0}\right\vert <\frac{\epsilon}{4}\right\}  .$
Then $f(u)\in M(I)^{\alpha}$. Let $C$\ be the ($\alpha$-invariant) hereditary
C*-subalgebra of $I$ generated by $f(u)If(u).$ Since $I$\ is an ideal, so
hereditary subalgebra of $A$,\ it follows that $C$\ is an $\alpha$-invariant
C*-subalgebra of $A.$\ Let $x\in C.$ Then%
\begin{align*}
\left\Vert \alpha(x)-x\right\Vert  &  =\left\Vert u_{0}x-xu_{0}\right\Vert
\leqslant\left\Vert (u_{0}-u)x-x(u_{0}-u)\right\Vert +\left\Vert
ux-xu\right\Vert <\\
&  <\frac{\epsilon}{2}\left\Vert x\right\Vert +\left\Vert ux-xu\right\Vert .
\end{align*}
Since $x\in C$\ we have%
\[
\left\Vert ux-xu\right\Vert =\left\Vert (u-\lambda_{0})x-x(u-\lambda
_{0})\right\Vert <\frac{\epsilon}{2}\left\Vert x\right\Vert .
\]
Therefore,
\[
\left\Vert \alpha(x)-x\right\Vert <\epsilon\left\Vert x\right\Vert .
\]
Let $\gamma\in\Gamma(\alpha),\gamma\neq1.$ Since $\gamma\in Sp(\alpha|_{C}%
)$\ there exists $x\in C,x\neq0$ such that $\alpha(x)=\left\langle
1,\gamma\right\rangle x.$ Therefore%
\[
\left\Vert \left\langle 1,\gamma\right\rangle x-x\right\Vert =\left\vert
\left\langle 1,\gamma\right\rangle -1\right\vert \left\Vert x\right\Vert
<\epsilon\left\Vert x\right\Vert
\]
So\ $\left\vert \left\langle 1,\gamma\right\rangle -1\right\vert <\epsilon<d.$
This contradiction shows that $\Gamma(\alpha)=\left\{  1\right\}  .$

Conversely, suppose that $\Gamma(\alpha)=\left\{  1\right\}  .$ Then
$G_{0}=G.$ In particular $1\in G_{0},$ that is $\left\langle 1,\gamma
\right\rangle =1$ for $\gamma\in\Gamma(\alpha)$ $(=\left\{  1\right\}  ).$ By
[11, Proposition 7.], there exists an $\alpha$-invariant essential ideal
$J$\ and a unitary $u\in M(J)^{\alpha}$ \ such that $\alpha=Adu.$ Therefore,
since $M(J)^{\alpha}\subset M_{loc}(A)^{\alpha}\subset M_{loc}(A),$ $\alpha$
\ is not properly outer.\ 
\end{proof}

\bigskip

For separable C*-algebras the hypothesis in the above Lemma that the
automorphism generates a finite group is not necessary ([8, Theorem 6.6.]). I
do not know if this is true for non separable C*-algebras.

\bigskip The following Corollary is a reformulation of Corollary 5 using the
above Lemma 6

\begin{corollary}
Let $(B,H,\beta)$\ be a C*-dynamical system with $B$\ prime and $H$ finite, a
a positive element of $B$, $\left\{  a_{i}:i=1,...n\right\}  $ elements of
$B$, $\left\{  \beta_{i}:i=1,...n\right\}  \subset H$ a subset of properly
outer automorphisms of $B$ and $\epsilon>0.$ Then, there exists a positive
$x\in B$ with $\left\Vert x\right\Vert =1$ such that%
\[
\left\Vert xax\right\Vert \geqslant\left\Vert a\right\Vert -\epsilon\text{ and
}\left\Vert xa_{i}\beta_{i}(x)\right\Vert \leq\epsilon
\]

\end{corollary}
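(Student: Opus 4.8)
The plan is to derive Corollary 7 directly from Corollary 5 by using Lemma 6 to translate the hypothesis about the automorphisms. The corollary I want to prove has exactly the same conclusion as Corollary 5, and the same structure of hypotheses, except that in Corollary 5 the condition on each automorphism $\alpha_i$ is $\Gamma(\alpha_i)\neq\{1\}$, whereas here the automorphisms $\beta_i$ are assumed to be properly outer. So the entire content of the reformulation is the equivalence supplied by Lemma 6.

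First I would observe that each $\beta_i$ lies in the finite group $H$ acting on the prime C*-algebra $B$, and hence $\beta_i^{|H|}=\mathrm{id}$; in particular $\beta_i$ generates a finite cyclic group of automorphisms. This is exactly the standing hypothesis ``$\alpha^n=\mathrm{id}$ for some $n$'' required to apply Lemma 6. Since each $\beta_i$ is assumed properly outer and $B$ is prime, Lemma 6 applies to the single automorphism $\beta_i$ acting on $B$ and yields $\Gamma(\beta_i)\neq\{1\}$ for every $i=1,\dots,n$.

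Next I would simply invoke Corollary 5 with the prime C*-algebra $B$, the positive element $a$, the elements $\{a_i\}$, and the automorphisms $\{\beta_i\}$, whose Connes spectra are now known to be nontrivial. Corollary 5 then produces the desired positive $x\in B$ with $\|x\|=1$ satisfying $\|xax\|\geq\|a\|-\epsilon$ and $\|xa_i\beta_i(x)\|\leq\epsilon$, which is precisely the conclusion. The proof is therefore a one-line application once the hypotheses of Corollary 5 are verified.

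**The only subtle point** — and the step I would watch most carefully — is making sure Lemma 6 is being applied to the correct dynamical data. Lemma 6 concerns $\Gamma(\beta_i)$ as the Connes spectrum of the single automorphism $\beta_i$ (equivalently, of the cyclic group it generates) acting on $B$, which is exactly what Corollary 5 requires, so there is no mismatch: in both statements $\Gamma(\beta_i)$ refers to the Connes spectrum of $\beta_i$ on the same algebra $B$. One should confirm that proper outerness of $\beta_i$ as an element of the $H$-action coincides with proper outerness of $\beta_i$ as a standalone automorphism — but since Definition 1 of proper outerness refers only to implementation by a unitary in $M_{loc}(B)$ and does not depend on the ambient group, this is immediate. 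Thus no genuine obstacle arises, and the corollary follows by combining Lemma 6 with Corollary 5.
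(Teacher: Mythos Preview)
Your proposal is correct and matches the paper's approach exactly: the paper itself introduces Corollary 7 simply as ``a reformulation of Corollary 5 using the above Lemma 6'' and gives no further argument. Your careful checks (that $\beta_i^{|H|}=\mathrm{id}$ so Lemma 6 applies, and that proper outerness is group-independent) are the only things needed to make that one-line reformulation precise.
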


\bigskip I will discuss next the notion of topological transitivity for groups
of automorphisms of a C*-algebra and some duality results for automorphisms.
If $X$ is a locally compact topological space, a homeomorphism $\varphi$\ of
$X$\ is called topologically transitive if for every pair of non empty
$\varphi$-invariant open subsets $U_{1},U_{2}$ their intersection $U_{1}\cap
U_{2}$ is non empty. For general C*-algebras we defined

\begin{definition}
\bigskip\lbrack7, Definition 2.1. b)] Let $(A,H,\beta)$ be a C*-dynamical
system. The action $\beta$\ is called topologically transitive if for every
pair of non zero $\beta$-invariant hereditary C*-subalgebras $B_{1},B_{2}$\ of
$A,$\ their product $B_{1}B_{2}$\ is non zero.
\end{definition}

\bigskip The following form of the above definition has been noticed in [3]:

\begin{remark}
\bigskip\lbrack3]The action $\beta$\ is topologically transitive if and only
if for every pair of non zero elements $x,y\in$ $A$ \ there exists $h\in H$
such that $x\beta_{h}(y)\neq0.$
\end{remark}

In [7, Theorem 3.1.] we have proved the following duality result for
automorphisms of C*-algebras:

\begin{theorem}
Let $(A,G,\alpha)$\ be a system with $G$\ compact abelian. Suppose that there
exists a topologically transitive group of automorphisms $(H,\beta)$ of
$A$\ that commutes with $\alpha,$ that is $\alpha_{g}\beta_{h}=\beta_{h}%
\alpha_{g}$ for all $g\in G,h\in H.$ If $\sigma$\ is an automorphism of
$A$\ that leaves $A^{\alpha}$\ pointwise invariant and commutes with $\beta
$\ then there exists $g\in G$\ such that $\sigma=\alpha_{g}.$
\end{theorem}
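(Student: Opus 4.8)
The plan is to exploit the spectral decomposition of $A$ under the compact abelian group $G$ together with the duality between $G$ and $\widehat{G}$. First I would recall that, since $G$ is compact abelian, each element $a\in A$ decomposes into spectral components $a_\gamma\in A_\gamma$ over $\gamma\in\widehat{G}$, where $A_\gamma=\{a:\alpha_g(a)=\langle g,\gamma\rangle a\}$, and $A_e=A^\alpha$. The hypothesis on $\sigma$ is that $\sigma|_{A^\alpha}=\mathrm{id}$ and $\sigma$ commutes with every $\beta_h$. The goal is to show $\sigma=\alpha_g$ for a single $g\in G$. The natural strategy is to show that $\sigma$ acts on each spectral subspace $A_\gamma$ as multiplication by a fixed scalar $\chi(\gamma)$ of modulus one, that $\chi$ is a character of $\widehat{G}$ (i.e.\ $\chi\in\widehat{\widehat{G}}\cong G$ by Pontryagin duality), and hence that $\sigma$ coincides with $\alpha_g$ where $g$ is the element of $G$ corresponding to $\chi$.

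The key steps, in order, are as follows. First I would verify that $\sigma$ preserves each spectral subspace $A_\gamma$: for $x\in A_\gamma$ one checks that $\sigma(x)$ again satisfies the eigenvalue relation, which would follow if $\sigma$ commuted with $\alpha$; but since commutation of $\sigma$ with $\alpha$ is what we are ultimately after, instead I would use that $\sigma$ fixes $A^\alpha$ pointwise and commutes with $\beta$ to control $\sigma$ on products. Concretely, for $x,y\in A_\gamma$ the element $x^*y$ lies in $A^\alpha$, so $\sigma(x^*y)=x^*y$; applying $\sigma$ and using that $\sigma$ is a $*$-automorphism gives $\sigma(x)^*\sigma(y)=x^*y$, and similarly $\sigma(x)\sigma(y)^*=xy^*$. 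This rigidity says that on each $A_\gamma$ the map $x\mapsto\sigma(x)$ is an isometry whose ``phase'' relative to the identity is constrained. The plan is then to deduce that there is a scalar $\chi(\gamma)\in\mathbb T$ with $\sigma(x)=\chi(\gamma)\,x$ for all $x\in A_\gamma$; here is exactly where topological transitivity of $\beta$ enters, because it guarantees enough nonvanishing products $x\beta_h(y)$ to force the phase to be a single scalar independent of $x\in A_\gamma$ rather than varying element-by-element.

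Having obtained the scalars $\chi(\gamma)$, the next step is to check the multiplicativity $\chi(\gamma_1\gamma_2)=\chi(\gamma_1)\chi(\gamma_2)$, which comes from the inclusion $A_{\gamma_1}A_{\gamma_2}\subset A_{\gamma_1\gamma_2}$ together with $\sigma$ being an automorphism, applied to products that are nonzero (again supplied by transitivity and primeness). Since $A$ is prime and the action is faithful, $sp(\alpha)$ generates $\widehat{G}$ densely enough that $\chi$ extends to a genuine character of the full dual group $\widehat{G}$. By Pontryagin duality a character of $\widehat{G}$ is evaluation at some $g\in G$, i.e.\ $\chi(\gamma)=\langle g,\gamma\rangle$. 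For that $g$ one then has $\sigma(x)=\langle g,\gamma\rangle x=\alpha_g(x)$ for every $x\in A_\gamma$ and every $\gamma$, and since the spectral subspaces span a dense subalgebra of $A$, continuity gives $\sigma=\alpha_g$ on all of $A$.

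The main obstacle I expect is the second step: passing from the bilinear constraints $\sigma(x)^*\sigma(y)=x^*y$ on each $A_\gamma$ to the conclusion that $\sigma$ acts as a single scalar $\chi(\gamma)$ on $A_\gamma$. The constraint alone only forces $\sigma$ to be a phase-preserving partial isometry-like map, and without an extra input it could in principle twist different elements of $A_\gamma$ by different unimodular factors. Resolving this is precisely where I would lean on Remark~11 (the transitivity criterion) and the commutation $\sigma\beta_h=\beta_h\sigma$: given $x,y\in A_\gamma$, transitivity yields $h\in H$ with $x^*\beta_h(y)\neq0$, and since $x^*\beta_h(y)$ is built from $A^\alpha$-type data fixed by $\sigma$ while $\beta_h$ commutes with $\sigma$, one extracts the relation $\overline{\chi(\gamma,x)}\,\chi(\gamma,y)=1$, forcing the phase to be constant on $A_\gamma$. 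Making this rigidity argument airtight—ensuring the relevant products are nonzero and that the phases genuinely coincide across all of $A_\gamma$ rather than merely on a transitivity-connected subset—is the technical heart of the proof.
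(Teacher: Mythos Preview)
The paper does not contain a proof of this theorem: it is quoted from reference~[7, Theorem~3.1] (Longo--Peligrad) and used as a black box in the proof of Theorem~12. There is therefore no proof in the present paper to compare your attempt against. Your overall strategy---analyse $\sigma$ on the spectral subspaces $A_\gamma$, show it acts by a unimodular scalar $\chi(\gamma)$, check that $\chi$ is multiplicative on $sp(\alpha)$, extend to a character of $\widehat G$, and invoke Pontryagin duality to produce $g\in G$---is the natural one and is indeed the line of argument in~[7].

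That said, your sketch has a genuine gap precisely at the point you flag as the ``main obstacle''. From the relations $\sigma(x)^*\sigma(y)=x^*y$ and $\sigma(x)\sigma(y)^*=xy^*$ for $x,y\in A_\gamma$ you cannot yet write $\sigma(x)=\chi(\gamma,x)\,x$ with a scalar $\chi(\gamma,x)$ depending on $x$: that formula already presupposes $\sigma(x)\in\mathbb{C}x$, which is the very conclusion in question. Consequently the transitivity step you propose---pick $h$ with $x^*\beta_h(y)\neq0$ and read off $\overline{\chi(\gamma,x)}\,\chi(\gamma,y)=1$---is circular as written. What must be supplied first is an argument that $\sigma(A_\gamma)\subset A_\gamma$ (equivalently, that $\sigma$ commutes with $\alpha$); only after that can one speak of a phase and use transitivity of $\beta$ together with $\sigma\beta_h=\beta_h\sigma$ to show the phase is constant on $A_\gamma$. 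You also invoke ``$A$ is prime and the action is faithful'' to control $sp(\alpha)$, but neither hypothesis appears in the statement; the only structural input available is the existence of the topologically transitive commuting action $(H,\beta)$, and the argument has to be run with that alone.
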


In [11] we have proved the following

\begin{theorem}
\bigskip Let $(A,G,\alpha)$\ be a C*-dynamical system with $A$\ prime,
$G$\ finite abelian and $\alpha$\ faithful. The following conditions are
equivalent\newline a) $\alpha_{g}$ is properly outer for every $g\in
G,g\neq0.$\newline b) $(A^{\alpha})^{\prime}\cap M_{loc}(A)=%
\mathbb{C}
I.$\newline c) $(A)^{\prime}\cap M_{loc}(A\times_{\alpha}G)=%
\mathbb{C}
I.$\newline d) $A^{\alpha}$\ is prime and $\widehat{\alpha}_{\gamma}$\ is
properly outer (in $M_{loc}(A\times_{\alpha}G))$ for every $\gamma
\in\widehat{G},\gamma\neq0$\newline
\end{theorem}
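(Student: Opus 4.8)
The plan is to let the Takai duality between $\alpha$ and the dual action $\widehat{\alpha}$ of $\widehat{G}$ carry most of the argument, exploiting that $A=(A\times_{\alpha}G)^{\widehat{\alpha}}$, so that c) is literally condition b) written for the dual system $(A\times_{\alpha}G,\widehat{G},\widehat{\alpha})$. I would first record the two cheap contrapositives. If some $\alpha_{g_{0}}$ with $g_{0}\neq e$ equals $\operatorname{Ad}u$ for a unitary $u\in M_{loc}(A)$, then $u$ commutes with $A^{\alpha}$ and is non-scalar (otherwise $\alpha_{g_{0}}=\mathrm{id}$, against faithfulness), so b) fails; and one checks directly that $u^{\ast}\delta_{g_{0}}$ commutes with $A$ and is a non-scalar element of $A^{\prime}\cap M_{loc}(A\times_{\alpha}G)$, so c) fails. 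Hence $\neg$a)$\Rightarrow\neg$b) and $\neg$a)$\Rightarrow\neg$c).

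The core is a)$\Rightarrow$c). Given $T\in A^{\prime}\cap M_{loc}(A\times_{\alpha}G)$, decompose it along the extended dual action, $T=\sum_{g\in G}T_{g}$ with $\widehat{\alpha}_{\gamma}(T_{g})=\langle g,\gamma\rangle T_{g}$; since $\widehat{\alpha}$ fixes $A$ pointwise, each $T_{g}\in A^{\prime}$. For $g=e$, Lemma 3 a) for the dual system gives $T_{e}\in M_{loc}(A\times_{\alpha}G)^{\widehat{\alpha}}\subseteq M_{loc}(A)$, whence $T_{e}\in A^{\prime}\cap M_{loc}(A)=\mathbb{C}I$ by Lemma 3 b). Writing $T_{g}=S_{g}\delta_{g}$ with $S_{g}\in M_{loc}(A)$, commutation with every $b\in A$ reads $S_{g}\alpha_{g}(b)=bS_{g}$; a short computation then shows that $S_{g}S_{g}^{\ast}$ and $S_{g}^{\ast}S_{g}$ commute with all of $A$, so by Lemma 3 b) they are scalars and a non-zero $S_{g}$ is a scalar multiple of a unitary $w$ with $\alpha_{g}=\operatorname{Ad}w^{\ast}$ in $M_{loc}(A)$. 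For $g\neq e$ this contradicts a), so $T_{g}=0$ and $T=T_{e}\in\mathbb{C}I$. I emphasise that no polar decomposition in $M_{loc}$ is needed here: the scalars are produced by Lemma 3 b) alone, precisely because $T_{g}$ commutes with all of $A$.

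For a)$\Leftrightarrow$b) I would run the same decomposition inside $M_{loc}(A)$ along $\alpha$, but now a component $T_{\gamma}$ commutes only with $A^{\alpha}$, and this is where the work concentrates. The products $T_{\gamma}^{\ast}T_{\gamma},T_{\gamma}T_{\gamma}^{\ast}$ lie in $M_{loc}(A)^{\alpha}\cap(A^{\alpha})^{\prime}\subseteq Z(M_{loc}(A^{\alpha}))$ by Lemma 3 a), and I may call them scalar only after proving that $A^{\alpha}$ is prime. Granting that, a non-zero $T_{\gamma}$ becomes a multiple of a unitary $w\in M_{loc}(A)_{\gamma}$ with $\operatorname{Ad}w$ fixing $A^{\alpha}$ pointwise and, since $w$ is an $\alpha$-eigenvector, commuting with $\alpha$. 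Restricting to an $\alpha$-invariant essential ideal $I$ with $w\in M(I)$ (so $\operatorname{Ad}w$ preserves $I$) and invoking the duality Theorem 10 with $\beta=\alpha$ identifies $\operatorname{Ad}w$ with some $\alpha_{g}$; by a) this forces $g=e$, so $w$ is central, hence scalar, hence $T_{\gamma}=0$ for $\gamma\neq e$, while $T_{0}\in Z(M_{loc}(A^{\alpha}))=\mathbb{C}I$. The two inputs this direction needs are supplied by the spectral theory: via Lemma 6 applied to each cyclic restriction, a) amounts to fullness of the (strong) Connes spectrum, which by the Kishimoto machinery behind Lemma 4 and Corollary 5 makes $A\times_{\alpha}G$ prime; then $A^{\alpha}=e(A\times_{\alpha}G)e$, for the averaging projection $e=\frac{1}{|G|}\sum_{g}\delta_{g}$, is prime as a hereditary subalgebra of a prime algebra, and the topological transitivity of $\alpha$ needed for Theorem 10 I would likewise extract from the same spectral hypotheses.

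Finally I would incorporate d) and close by duality. From a) I already have $A\times_{\alpha}G$ prime, hence $A^{\alpha}$ prime by the corner identity above, which is the first half of d); for the second half, Takai duality $(A\times_{\alpha}G)\times_{\widehat{\alpha}}\widehat{G}\cong A\otimes\mathcal{K}(\ell^{2}G)$ together with Lemma 3 b) for the prime algebra $A$ gives the strict outerness of the dual system, and the easy contrapositive then yields proper outerness of every $\widehat{\alpha}_{\gamma}$; thus a)$\Rightarrow$d). Conversely, the fixed-point algebra of the dual system is $(A\times_{\alpha}G)^{\widehat{\alpha}}=A$, prime by hypothesis, so running the a)$\Rightarrow$b) argument for $(A\times_{\alpha}G,\widehat{G},\widehat{\alpha})$ converts the proper outerness of $\widehat{\alpha}$ supplied by d) into c), whence a) via a)$\Leftrightarrow$c); this gives d)$\Rightarrow$a) and closes a)$\Leftrightarrow$b)$\Leftrightarrow$c)$\Leftrightarrow$d). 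The genuine obstacle is the off-diagonal vanishing underlying a)$\Rightarrow$b) and its dual: it alone forces one to know in advance that the relevant fixed-point algebra is prime and to call on Theorem 10 --- hence on the topological transitivity of the acting group --- in order to recognise an $M_{loc}$-inner symmetry fixing the fixed-point algebra as a genuine $\alpha_{g}$ (resp. $\widehat{\alpha}_{\gamma}$); securing that transitivity together with the primeness transfers (full strong Connes spectrum $\Rightarrow A\times_{\alpha}G$ prime $\Rightarrow$ corner prime, plus Morita invariance of proper outerness) is the bulk of the technical cost.
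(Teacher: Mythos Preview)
There is nothing in this paper to compare against: Theorem~11 is quoted verbatim from the author's earlier paper~[11] and is not re-proved here. The present paper's contribution is Theorem~12, which appends the further conditions e), f), g) and closes the cycle d)$\Rightarrow$e)$\Leftrightarrow$f)$\Rightarrow$g)$\Rightarrow$b), taking a)--d) as already equivalent by~[11].

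That said, your outline is essentially sound. The implication a)$\Rightarrow$c) via the $\widehat{\alpha}$-decomposition $T=\sum_g S_g\delta_g$ and Lemma~3~b) is clean and self-contained. For a)$\Rightarrow$b) you correctly isolate the two genuine inputs --- primeness of $A^{\alpha}$ and a duality step recognising $\operatorname{Ad}w$ as some $\alpha_g$ --- and you supply them via ``a)$\Rightarrow\Gamma(\alpha)=\widehat{G}$ (Lemma~6) $\Rightarrow A\times_\alpha G$ prime $\Rightarrow A^{\alpha}=e(A\times_\alpha G)e$ prime'' together with Theorem~10 on a prime $\alpha$-invariant ideal (where topological transitivity of $\alpha$ is automatic). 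The one place you should tighten is the reference to ``Kishimoto machinery'': the step $A$ prime $+$ $\Gamma(\alpha)=\widehat{G}\Rightarrow A\times_\alpha G$ prime is an Olesen--Pedersen/Kishimoto theorem external to this paper and should be cited explicitly. Your Takai argument for a)$\Rightarrow$d) is unnecessarily vague; it is simpler to note that a)$\Rightarrow$c) is exactly b) for the dual system, and your own contrapositive $\neg$a)$\Rightarrow\neg$b) (applied to the dual system) then gives proper outerness of each $\widehat{\alpha}_\gamma$.

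It is worth observing that your a)$\Rightarrow$b) argument is, in effect, the implication g)$\Rightarrow$b) of Theorem~12 run inside the proof: you first manufacture the content of g) from a) via Theorem~10, then use it exactly as the paper does in its proof of g)$\Rightarrow$b) (eigen-unitary $v$ commuting with $A^{\alpha}$, identification $\operatorname{Ad}v=\alpha_{g_0}$, primeness of $A^{\alpha}$ plus Lemma~3 to force $v$ scalar). So although there is no proof of Theorem~11 here to set beside yours, your method coincides with the mechanism the paper deploys for the neighbouring Theorem~12.
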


If $(A,G,\alpha)$\ is a C* dynamical system with $A$\ prime then, since $A$ is
an essential ideal of $M(A),$\ from [9, page 302 statement 3)], we have
$M_{loc}(M(A))=M_{loc}(A).$ It follows that if $\alpha_{g}$\ is a properly
outer automorphism of $A$\ then the extension of $\alpha_{g}$\ to $M(A)$\ is
also properly outer. Therefore the above Theorem 11 can be stated for
$M(A)$\ instead of $A.$

\begin{theorem}
Let $(A,G,\alpha)$\ be a C*-dynamical system with $A$\ prime, $G$\ finite
abelian and $\alpha$\ faithful. Then the statements a)-d) in Theorem 11 are
equivalent to each of the following\newline\newline e) If $x,y\in M_{loc}(A),$
are nonzero elements, then $xM_{loc}(A)^{\alpha}y\neq\left\{  0\right\}
.$\newline f) If \ $\mathcal{U}$\ denotes the group of unitary elements of
$M_{loc}(A)^{\alpha}$ then the group $H=\left\{  Adu:u\in\mathcal{U}\right\}
$\ acts topologically transitively on $M_{loc}(A).$\newline g) $A^{\alpha}$ is
prime and if $\beta$\ is an automorphism of $M_{loc}(A)$\ that leaves
$A^{\alpha}$\ pointwise invariant then there exists $g\in G$\ such that
$\beta=\alpha_{g}.$
\end{theorem}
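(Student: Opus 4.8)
The plan is to treat e)$\Leftrightarrow$f) as a reformulation and then to splice e), f), g) into the equivalence of Theorem 11 along the cycle a)--d)$\Rightarrow$f)$\Leftrightarrow$e), a)--d)$\Rightarrow$g), and g)$\Rightarrow$b); since b) already appears in Theorem 11, this closes the loop. The two substantive inputs are Corollary 7 (Kishimoto's excision lemma), which produces the topological transitivity of f), and the duality Theorem 10, which produces the automorphism rigidity of g); the clean algebraic step is g)$\Rightarrow$b).

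First I would prove e)$\Leftrightarrow$f). Since $M_{loc}(A)^{\alpha}$ is a unital C*-algebra it is the closed linear span of its unitary group $\mathcal{U}$, and for a unitary $u$ and $x,y\in M_{loc}(A)$ one has $xuyu^{\ast}=0$ iff $xuy=0$ because $u$ is invertible. By Remark 9, f) says exactly that for each pair of nonzero $x,y$ there is $u\in\mathcal{U}$ with $x\,(Adu)(y)=xuyu^{\ast}\neq0$, i.e. $xuy\neq0$; expanding an arbitrary $z\in M_{loc}(A)^{\alpha}$ as a combination of unitaries shows this is equivalent to $xM_{loc}(A)^{\alpha}y\neq\{0\}$, which is e).

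Next, a)--d)$\Rightarrow$f). Any two elements of $M_{loc}(A)$ lie in a common $M(I)$ for some $\alpha$-invariant essential ideal $I$ (take the intersection of the translates of an ideal carrying them); since $I$ is an ideal of $M(I)$ and is essential there, given nonzero $x,y\in M(I)$ one finds $a,b\in I$ with $ax,\,yb\in I$ nonzero, and it suffices to produce $z\in M_{loc}(A)^{\alpha}$ with $(ax)z(yb)\neq0$. For nonzero elements of the prime algebra $I$ I would take $z=P^{\alpha}(w)=\frac{1}{|G|}\sum_{g}\alpha_{g}(w)$ and show the identity component of $\sum_{g}(ax)\,\alpha_{g}(w)\,(yb)$ survives. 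Here Corollary 7 enters, applied with $B=I$ and the properly outer automorphisms $\{\alpha_{g}|_{I}:g\neq e\}$ (restrictions to the invariant ideal remain properly outer since $M_{loc}(I)=M_{loc}(A)$): it supplies an excision element keeping the diagonal $g=e$ contribution comparable to a fixed nonzero term $(ax)w_{0}(yb)$ while forcing every off-diagonal $\alpha_{g}$-twisted contribution below a prescribed $\varepsilon$, primeness of $I$ guaranteeing a nonzero diagonal term to start from. This averaging/excision argument is the analytic heart of the theorem and the step I expect to be the main obstacle, precisely because the norm estimates in Corollary 7 control a product only when the excision element is adjacent to the twisting automorphism, so the bookkeeping that positions it correctly relative to the outer factors must be arranged with care.

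Finally I would handle g). The converse g)$\Rightarrow$b) is the clean step: if $z\in(A^{\alpha})^{\prime}\cap M_{loc}(A)$ is a unitary then $Adz$ fixes $A^{\alpha}$ pointwise, so by g) $Adz=\alpha_{g}$ for some $g$; hence $\alpha_{g}$ is inner with $\alpha$-invariant implementing unitary $u_{g}$ (as in the proof of Lemma 6), and $Ad(zu_{g}^{\ast})=\mathrm{id}$ gives $zu_{g}^{\ast}\in A^{\prime}\cap M_{loc}(A)=\mathbb{C}I$ by Lemma 3 b), so $z\in\mathbb{C}u_{g}\subseteq M_{loc}(A)^{\alpha}\subseteq M_{loc}(A^{\alpha})$. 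Thus $(A^{\alpha})^{\prime}\cap M_{loc}(A)\subseteq(A^{\alpha})^{\prime}\cap M_{loc}(A^{\alpha})=\mathbb{C}I$ by Lemma 3 b) applied to the prime algebra $A^{\alpha}$, which is b). For the forward direction, $A^{\alpha}$ is prime by d), and to obtain the rigidity I would apply Theorem 10 to $(M_{loc}(A),G,\alpha)$ with the transitive group $H=\{Adu:u\in\mathcal{U}\}$ from f), which commutes with $\alpha$ since each $u$ is $\alpha$-fixed, so $\alpha_{g}(Adu)\alpha_{g}^{-1}=Ad\,\alpha_{g}(u)=Adu$. The delicate point, and the second place I expect trouble, is upgrading the hypothesis ``$\beta$ fixes $A^{\alpha}$ pointwise'' to ``$\beta$ fixes $M_{loc}(A)^{\alpha}$ pointwise'' (after which $\beta(u)=u$ yields the required commutation with $H$ for free): this I would extract from the fact that $A^{\alpha}$ is essential in $M_{loc}(A)^{\alpha}$ — a fixed-point multiplier annihilating $A^{\alpha}$ lies in $M_{loc}(A^{\alpha})$ and kills its essential subalgebra $A^{\alpha}$, hence vanishes — once it is checked that $\beta$ preserves $M_{loc}(A)^{\alpha}$. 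The same essentiality, combined with e) and Lemma 3 a), is what lets one show f)$\Rightarrow$g) outright (giving primeness of $A^{\alpha}$ directly from e) and thereby tying e), f) into the cycle), and I regard the careful handling of the embedding $M_{loc}(A)^{\alpha}\subseteq M_{loc}(A^{\alpha})$ throughout as the principal technical burden alongside the excision step.
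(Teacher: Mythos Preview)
Your overall plan matches the paper's: e)$\Leftrightarrow$f) is handled exactly as in the paper, the passage a)--d)$\Rightarrow$e) via Corollary 7 is the same idea (the paper packages the excision as the inequality $\sup\{\|xay\|:a\in A^{\alpha},\|a\|\le1\}\ge\|P^{\alpha}(x)\|\|P^{\alpha}(y)\|$ quoted from [2] and then approximates $x,y\in M_{loc}(A)$ by elements of $M(I_{n})$, but the content is what you describe), and f)$\Rightarrow$g) via Theorem 10 is the paper's route as well.

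The genuine gap is in your g)$\Rightarrow$b). From $Adz=\alpha_{g}$ you assert that ``$\alpha_{g}$ is inner with $\alpha$-invariant implementing unitary $u_{g}$ (as in the proof of Lemma~6)''. Lemma~6 only shows that an implementer of $\alpha_{g}$ is fixed by $\alpha_{g}$ itself, not by the whole group: since any two implementers of $\alpha_{g}$ differ by an element of $A'\cap M_{loc}(A)=\mathbb{C}I$, once $\alpha_{h}(z)=\gamma(h)z$ for a nontrivial character $\gamma$ there is \emph{no} $\alpha$-invariant implementer at all, and nothing you have assumed so far rules this out. The paper closes this gap by not starting from an arbitrary unitary: it takes a non-scalar positive $a\in(A^{\alpha})'\cap M_{loc}(A)$, sets $v=a+i(1-a^{2})^{1/2}$, gets $Adv=\alpha_{g_{0}}$ from g), uses commutativity of $G$ together with Lemma~3~b) to obtain $\alpha_{g}(v)=\gamma(g)v$ for a character $\gamma$, and then forces $\gamma\equiv1$ from $P^{\alpha}(v)\neq0$ (which holds because $P^{\alpha}(a)\neq0$ by faithfulness of $P^{\alpha}$). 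Only then does $v\in M_{loc}(A)^{\alpha}\subset M_{loc}(A^{\alpha})$ follow, and primeness of $A^{\alpha}$ finishes as you indicate. Replace your appeal to Lemma~6 by this character argument and the step goes through.
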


\begin{proof}
d)$\Rightarrow$e) It is sufficient to prove e) if $x,y$\ are nonzero positive
elements of $M_{loc}(A).$\ In [2, 4$\Rightarrow$1] the authors proved the
following inequality for compact groups and separable C*-algebras
\[
\sup\left\{  \left\Vert xay\right\Vert :a\in A^{\alpha},\left\Vert
a\right\Vert \leqslant1\right\}  \geqslant\left\Vert P^{\alpha}(x)\right\Vert
\left\Vert P^{\alpha}(y)\right\Vert .\text{ \ (1)}%
\]
for every pair of nonzero positive elements $x,y\in A.$ Their proof is valid
for not necessarily separable prime C*-algebras $A$\ and finite groups $G$\ if
instead of using [8, Lemma 7.1.] or [5, Lemma 3.2.], as the authors do, one
uses Corollary 7 in the current paper. Also, according to the discussion after
Theorem 11, we have for every pair of nonzero positive elements $x,y\in M(A)$%
\[
\sup\left\{  \left\Vert xay\right\Vert :a\in M(A)^{\alpha},\left\Vert
a\right\Vert \leqslant1\right\}  \geqslant\left\Vert P^{\alpha}(x)\right\Vert
\left\Vert P^{\alpha}(y)\right\Vert .\text{ \ (2)}%
\]

Now let $x,y\in M_{loc}(A)$\ be nonzero positive elements of norm $1$, so
since $P^{\alpha}\ $is a faithful conditional expectation, $P^{\alpha}(x)$ and
$P^{\alpha}(y)$ are nonzero positive elements$.$ Let $d=\min\left\{
\left\Vert P^{\alpha}(x)\right\Vert ,\left\Vert P^{\alpha}(y)\right\Vert
\right\}  .$ Then according to \ [14, Proposition L. 2. 2.], for every $n\in%
\mathbb{N}
,$ in particular for $n>\frac{2}{d}+2$,\ there exists an $\alpha$-invariant
ideal $I_{n}$ of $A$\ and $x_{n},y_{n}$\ nonzero positive elements of
$M(I_{n})$\ such that $\left\Vert x_{n}-x\right\Vert <\frac{d}{n}$\ and
$\left\Vert y_{n}-y\right\Vert <\frac{d}{n}$, hence $\left\Vert P^{\alpha
}(x_{n})-P^{\alpha}(x)\right\Vert <\frac{d}{n}$ and $\left\Vert P^{\alpha
}(y_{n})-P^{\alpha}(y)\right\Vert <\frac{d}{n}$, so
\[
\left\Vert P^{\alpha}(x_{n})\right\Vert >\left\Vert P^{\alpha}(x)\right\Vert
-\frac{d}{n}\ \text{and }\left\Vert P^{\alpha}(y_{n})\right\Vert >\left\Vert
P^{\alpha}(y)\right\Vert -\frac{d}{n}.\text{ \ \ (3)}%
\]
Also, since $(norm)\lim x_{n}=x$, $(norm)\lim y_{n}=y$ and $x,y$ are norm
$1$\ elements, we can take $x_{n},y_{n}$\ of norm $1.$\ Since by the
inequality (2) applied to $M(I_{n})$\ instead of $M(A)$\ we have%
\[
\sup\left\{  \left\Vert x_{n}ay_{n}\right\Vert :a\in M(I_{n})^{\alpha
},\left\Vert a\right\Vert \leqslant1\right\}  \geqslant\left\Vert P^{\alpha
}(x_{n})\right\Vert \left\Vert P^{\alpha}(y_{n})\right\Vert
\]
there exists $a_{n}\in M(I_{n})^{\alpha},\left\Vert a_{n}\right\Vert
\leqslant1$ such that
\[
\left\Vert x_{n}a_{n}y_{n}\right\Vert >\left\Vert P^{\alpha}(x_{n})\right\Vert
\left\Vert P^{\alpha}(y_{n})\right\Vert -\frac{d^{2}}{n^{2}}.\text{
\ \ \ \ \ \ \ \ \ \ \ \ \ \ \ \ \ \ \ \ \ \ \ \ \ \ \ \ \ \ \ \ \ \ (4)}%
\]
Next, notice that
\[
xa_{n}y=x_{n}a_{n}y_{n}-(x_{n}-x)a_{n}y_{n}-xa_{n}(y_{n}%
-y).\text{\ \ \ \ \ \ \ \ \ \ \ \ \ \ \ \ \ \ \ \ }%
\]
Therefore%
\begin{align*}
\left\Vert xa_{n}y\right\Vert  &  \geqslant\left\Vert x_{n}a_{n}%
y_{n}\right\Vert -\left\Vert (x_{n}-x)a_{n}y_{n}\right\Vert -\left\Vert
xa_{n}(y_{n}-y)\right\Vert >\\
&  \left\Vert x_{n}a_{n}y_{n}\right\Vert -\frac{2d}{n}%
\text{.\ \ \ \ \ \ \ \ \ \ \ \ \ \ \ \ \ \ \ \ \ \ \ \ \ \ \ \ \ \ \ \ \ \ \ \ \ \ \ \ \ \ \ \ \ \ \ \ \ \ \ \ \ \ \ \ (5)}%
\end{align*}
From (3), (4) and (5) it follows that%
\begin{align*}
\left\Vert xa_{n}y\right\Vert  &  >(\left\Vert P^{\alpha}(x)\right\Vert
-\frac{d}{n})(\left\Vert P^{\alpha}(y)\right\Vert -\frac{d}{n})-\frac{2d}%
{n}-\frac{d^{2}}{n^{2}}\geqslant d^{2}-\frac{2d}{n}-\frac{2d^{2}}{n}>0.\\
&  .
\end{align*}
so $xM_{loc}(A)^{\alpha}y\neq\left\{  0\right\}  .$\ \ \ \ \ \ \ \ \ \ \ \ \ \ \ \ \ \ \ \ \ \ \ \ \ \ \ \ \ \ \ \ \ \ \ \ \ \ \ \ \ \ \ \ \ \ \ \ \ \ \ \ \ \ 

e)$\Leftrightarrow$f)\ \ Let $x,y$ be nonzero elements of $M_{loc}(A).$\ Since
$xM_{loc}(A)^{\alpha}y\neq\left\{  0\right\}  ,$ then, in particular,
$x\mathcal{U}y\neq\left\{  0\right\}  ,$ so\ there exists $u\in\mathcal{U}%
$\ such that $xAdu(y)\neq0.$ By Remark 9 it follows that $H=\left\{
Adu:u\in\mathcal{U}\right\}  $\ acts topologically transitively on
$M_{loc}(A).$ The converse follows from the fact that the linear\ span of
$\mathcal{U}$\ equals $M_{loc}(A)^{\alpha}.$

f)$\Rightarrow$g) Let $x,y$\ be nonzero elements of $A^{\alpha}$.\ Then, by
e), $xM_{loc}(A)^{\alpha}y\neq\left\{  0\right\}  .$ Therefore, there exists
an $\alpha$-invariant ideal, $I\subset A$ such that $xM(I)^{\alpha}%
y\neq\left\{  0\right\}  .$ Since $M(I)^{\alpha}=M(I^{\alpha})$ [11, Lemma 2
b)], it follows that $xI^{\alpha}y\neq\left\{  0\right\}  ,$ so $xA^{\alpha
}y\neq\left\{  0\right\}  $ and thus $A^{\alpha}$\ is prime. Now, let $\beta
$\ be an automorphism of $M_{loc}(A)$ that leaves $A^{\alpha}$\ pointwise
invariant. Then for every $\alpha$-invariant ideal $I\subset A,\beta$ leaves
$I^{\alpha}$\ and, therefore $M(I^{\alpha})=M(I)^{\alpha}$ pointwise
invariant. Hence $\beta$\ leaves $M_{loc}(A)^{\alpha}$ pointwise invariant,
therefore $\beta$\ commutes with $H.$\ Since $H$\ acts topologically
transitively on $M_{loc}(A),$\ applying Theorem 10, the conclusion follows.

g)$\Rightarrow$b) Suppose that g) holds. If $(A^{\alpha})^{\prime}\cap
M_{loc}(A)\neq%
\mathbb{C}
I,$ let $a\in(A^{\alpha})^{\prime}\cap M_{loc}(A)$ be a non scalar, positive
element of norm one. Then a standard argument shows that there exists a
unitary operator $v\in(A^{\alpha})^{\prime}\cap M_{loc}(A)$ such that
$a=\frac{1}{2}(v+v^{\ast})$ ($v=a+i(1-a^{2})^{\frac{1}{2}}).$ Since
$P^{\alpha}$\ is faithful we have $P^{\alpha}(a)\neq0,$ hence $P^{\alpha
}(v)\neq0.$ Now, $\beta=Adv$ is an automorphism of $M_{loc}(A)$\ that leaves
$A^{\alpha}$\ pointwise invariant. Therefore, there exists $g_{0}\in G$\ such
that $\beta=\alpha_{g_{0}}.$ Clearly $\alpha_{g_{0}}(v)=v.$ I will prove next
that $\alpha_{g}(v)=v$ for all $g\in G.$ We have for $a\in A$%
\[
\alpha_{g}(\alpha_{g_{0}}(a))=\alpha_{g}(vav^{\ast})=\alpha_{g}(v)\alpha
_{g}(a)\alpha_{g}(v^{\ast}).
\]
Since $G$ is abelian%
\[
\alpha_{g}(\alpha_{g_{0}}(a))=\alpha_{g_{0}}(\alpha_{g}(a))=v\alpha
_{g}(a)v^{\ast}.
\]
Therefore, $v^{\ast}\alpha_{g}(v)\in A^{\prime}\cap M_{loc}(A).$ Since $A$\ is
prime, by Lemma 3 b) it follows that $v^{\ast}\alpha_{g}(v)=\gamma(g)I$ for
some scalar $\gamma(g)$ with $\left\vert \gamma(g)\right\vert =1$ because
$v\ $is unitary Thus, $\alpha_{g}(v)=\gamma(g)v.$ It is obvious that
$\gamma(g)$ is a character of $G.$ By assumption, $P^{\alpha}(v)\neq0$ and so
$\sum_{g\in G}\gamma(g)\neq0$. This can only happen if $\gamma(g)=1$ for all
$g.$ Therefore, $\alpha_{g}(v)=v$ for all $g\in G.$ This means that
$v\in(A^{\alpha})^{\prime}\cap M_{loc}(A)^{\alpha}.$ By Lemma 3 a),
$M_{loc}(A)^{\alpha}\subset M_{loc}(A^{\alpha})$ so $v\in(A^{\alpha})^{\prime
}\cap M_{loc}(A^{\alpha}).$ Since $A^{\alpha}$\ is prime, applying Lemma 3 b)
to $A^{\alpha}$\ instead of $A$\ it follows that $v$ is a scalar so $a$\ is a
scalar. This contradiction proves that b) follows from g).
\end{proof}

\bigskip

\bigskip

\begin{center}
\bigskip
\end{center}

\bigskip

\textbf{1} \ \ \ \ \ P. Ara and M.Mathieu, \textit{Local Multipliers of
C*-Algebras, }Springer, 2003.

\textbf{2} \ \ \ \ \ O. Bratteli, G. A. Elliott, D. E. Evans and A. Kishimoto,
\textit{Quasi-Product Actions of a Compact Abelian Group on a C*-Algebra,
}Tohoku J. of Math., 41 (1989), 133-161.

\textbf{3} \ \ \ \ \ O. Bratteli, G. A. Elliott and D. W. Robinson,
\textit{Strong topological transitivity and C* -dynamical systems}, J. Math.
Soc. Japan Vol. 37, No. 1, 1985, 115-133.

\textbf{4} \ \ \ \ \ R. Dumitru, C. Peligrad, B. Visinescu,
\textit{Automorphisms inner in the local multiplier algebras and Connes
spectrum}, Operator Theory 20, Theta 2006, 75-80.

\textbf{5} \ \ \ \ \ A. Kishimoto, \textit{Outer Automorphisms and Reduced
Crossed Products of Simple C*-Algebras, }Commun. Math. Phys. 81 (1981), 429-435.

\textbf{6} \ \ \ \ \ A. Kishimoto, \textit{Freely acting automorphisms of
C*-algebras, }Yokohama Math. J. 30 (1982), 40-47.

\textbf{7} \ \ \ \ \ R. Longo and C. Peligrad, \textit{Noncommutative
topological dynamics and compact actions on C*-algebras, }J. Funct.
Analysis58, Nr.2 (1984), 157-174.

\textbf{8} \ \ \ \ \ D. Olesen and G. K. Pedersen, \textit{Applications of the
Connes Spectrum to C*-Dynamical Systems, III, }J. Funct. Analysis 45 (1982), 357-390.

\textbf{9} \ \ \ \ \ G. K. Pedersen, \textit{Approximating Derivations on
Ideals of C*-Algebras}, Inventiones Math. 45 (1978), 299-305.

\textbf{10} \ \ \ G. K. Pedersen, \textit{C*-Algebras and Their Automorphism
Groups, }Second edition, Academic Press, 2018.

\textbf{11} \ \ \ C. Peligrad, \textit{Properly outer and strictly outer
actions of finite groups on prime C*-algebras, }International J. of
Math\textit{., }Vol 35, No. 4 (2024), 11 pages\textit{.}

\textbf{12} \ \ \ C. Peligrad, \textit{Duality for Compact Group Actions on
Operator Algebras and Applications: Irreducible Inclusions and Galois
Correspondence,} Internat. J. Math. Vol 31, No. 9 (2020), 12 pp.

\textbf{13} \ \ \ S. Vaes, \textit{The Unitary Implementation of a Locally
Compact Group Action, }J. Functional Anal. 180 (2001), 426-480.

\textbf{14} \ \ \ N. E. Wegge-Olsen, \textit{K-Theory and C*-algebras,} Oxford
University Press, 1993.

\end{document}